\newtheorem{theorem}{Theorem}
\newtheorem{definition}{Definition}
\newtheorem{proposition}{Proposition}
\newtheorem{remark}{Remark}
\newtheorem{example}[theorem]{Example}
\newtheorem{conjecture}{Conjecture}
\newtheorem*{theorem*}{Theorem}
\newtheorem*{question*}{Question}
\newcommand{\cX}{\mathcal X}
\title{Bounding the number of graph refinements for Brill--Noether existence}
\date{\today}
\author{Karl Christ}
\address{Institute of algebraic geometry, Leibniz University Hannover, Welfengarten 1, 30167 Hannover, Germany
\newline
Current: Dipartimento di Matematica\\
	Università di Torino\\Via Carlo Alberto 10 \\10123 Turin\\  Italy }
\email{karl.christ@unito.it}
\author{Qixiao Ma}
\address{Institute of algebraic geometry, Leibniz University Hannover, Welfengarten 1, 30167 Hannover, Germany
\newline 
Current: Insitute of Mathematical Sciences, ShanghaiTech University, 201210, Shanghai, China 
}
\email{maqx1@shanghaitech.edu.cn}
\begin{document}

\begin{abstract}
Let $G$ be a finite graph of genus $g$. Let $d$ and $r$ be non-negative integers such that the Brill--Noether number is non-negative. Then the Brill--Noether existence conjecture due to Baker predicts the existence of a divisor of degree $d$ and rank at least $r$ on $G$. 

The conjecture is known to be true on the $k$-th homothetic refinement $G^{(k)}$ of $G$, for $k$ sufficiently large. Here we use results from classical Brill--Noether theory to give a uniform upper bound for $k$ in terms of $g,d,$ and $r$. We also discuss some algebro-geometric aspects of the conjecture and of recent counterexamples to a related conjecture  found in \cite{MR4397104}. 

\end{abstract}

\maketitle

\section{Introduction}
Let $g,d,$ and $r$ be non-negative integers such that the Brill--Noether number is non-negative,  $$\rho(g,d,r):=(r+1)(d-r)-g\cdot r\geq0.$$ 
Let $C$ be a smooth proper curve of genus $g$ defined over $\mathbb{C}$. The classical Brill--Noether existence theorem 
asserts that there always exists a degree $d$ divisor $D$ on $C$, whose rank $r(D)=h^0(C,\mathcal{O}_C(D))-1$ is at least $r$. 

In \cite{BN}, Baker and Norine introduced a divisor theory on finite graphs, that mirrors large parts of the algebro-geometric behavior. In \cite{MR2448666}, Baker provided the first link between the algebro-geometric and combinatorial worlds via his specialization lemma, and since then this relationship has been extensively studied. Among the original conjectures in \cite{MR2448666}, Brill--Noether existence for finite graphs is the last one that remains open: 
\begin{conjecture}[\cite{MR2448666} Conjecture 3.9(1)]{\label{c1}}
Fix integers $g,d,r\geq0$, such that $\rho(g,d,r)=(r+1)(d-r)-g\cdot r \geq 0$. Then every graph of genus $g$ admits a divisor $D$ with $\mathrm{deg}(D)=d$ and $r(D)\geq r$.
\end{conjecture}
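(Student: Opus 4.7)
The plan is to prove the conjecture by refining the specialization approach that underlies the bounded-$k$ theorem of the paper, pushing it hard enough to avoid any refinement of $G$ whatsoever. Realize $G$ as the dual graph of a regular semistable arithmetic surface $\pi : \mathcal{X} \to \mathrm{Spec}(R)$, where $R = k[[t]]$ has algebraically closed residue field $k$, with smooth generic fiber $X/K$ of genus $g$ and all node thicknesses equal to $1$; such a model exists for any $G$ by smoothing the nodal curve obtained from $G$ by placing a $\mathbb{P}^1$ at each vertex. Classical Brill--Noether on $X_{\overline{K}}$ gives $W^r_d(X) \neq \emptyset$ of dimension at least $\rho(g,d,r) \geq 0$. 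If a $K$-rational line bundle $\mathcal{L} \in W^r_d(X)$ can be found, then, since $\mathcal{X}$ is regular, $\mathcal{L}$ extends to a line bundle on $\mathcal{X}$, and Baker's specialization lemma produces a divisor class $\tau(\mathcal{L})$ on $G$ of degree $d$ and Baker--Norine rank at least $r$; any effective representative yields the required divisor and proves the conjecture.

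The main obstacle is precisely the existence of a $K$-rational Brill--Noether line bundle. Without it, we have only a $\overline{K}$-rational $\mathcal{L}$, which extends to a line bundle on a regular model only after a ramified base change $R \to R'$ of some degree $k > 1$, and the new special fiber has dual graph $G^{(k)}$ rather than $G$ --- exactly the scenario producing the refinement statement that the paper controls. The strategy to eliminate the base change is to exploit the freedom in choosing the smoothing $\mathcal{X}$: as the smoothing varies in the versal deformation space of the nodal curve with dual graph $G$, the loci $W^r_d(X_K)$ assemble into a flat family over the base, and the plan is to show that for at least one smoothing $\mathcal{X}$, $W^r_d(X_K)(K) \neq \emptyset$.

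The hardest step will be establishing this $K$-rationality in the required generality. One natural route is to combine the explicit bound on $k$ from this paper --- which constrains the obstruction to $K$-rationality for any given smoothing to lie in a finite subgroup of $\mathrm{Jac}(G) \cong \Phi(\mathcal{X})$ whose order is controlled by $k$ --- with a monodromy argument on the relative Brill--Noether scheme over the boundary stratum of $\overline{\mathcal{M}}_g$ indexed by $G$. Concretely, it would suffice to show that as $\mathcal{X}$ varies over this stratum, the image of the relative BN locus inside the relative component group $\Phi$ is surjective, or at least contains the identity. I expect the technical core of the full conjecture to reside in this surjectivity/monodromy statement, which appears to require input on the geometry of relative Brill--Noether loci along boundary strata of $\overline{\mathcal{M}}_g$ going beyond the semistable reduction and intersection-theoretic inputs that suffice for the refinement version proved in this paper.
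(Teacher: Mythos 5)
This statement is Baker's Brill--Noether existence conjecture for finite graphs, which the paper explicitly records as \emph{open} (``the last one that remains open,'' not known even for $r=1$); the paper proves only the weaker Theorem~\ref{main}, namely existence of the desired divisor on some bounded homothetic refinement $G^{(k)}$. Your proposal does not close that gap. Everything before ``the main obstacle'' is the standard specialization argument, and the entire content of the conjecture is concentrated in the step you defer: producing, for every $G$, a regular one-parameter smoothing $\mathcal{X}\to\mathrm{Spec}(\mathbb{C}[[t]])$ of a curve with dual graph $G$ such that $W^r_d(\mathcal{X}_\eta)$ has a $\mathbb{C}((t))$-rational point. This is verbatim condition~(2) in the paper's list of ``sufficient algebro-geometric claims'' (Section~\ref{sufficiency}), which the authors state would suffice for the conjecture but is not known; they also recall that the published proof of the conjecture along exactly these lines (\cite{MR2895184}, Theorem 6.3) contains a gap at precisely this point. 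Ending with ``I expect the technical core of the full conjecture to reside in this surjectivity/monodromy statement'' is an admission that the proof is not there.

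Beyond the overall incompleteness, two specific steps are unsupported or wrong. First, the paper's own genus-$4$ example (a $(2,3)$-complete intersection on a nodal quadric) shows that whether $W^1_3(\mathcal{X}_\eta)$ has a $K$-point genuinely depends on the choice of smoothing: for a smoothing transversal to the discriminant, $W^1_3(\mathcal{X}_\eta)$ is an irreducible closed point of degree $2$ with no $\mathbb{C}((t))$-points. So a ``good'' smoothing is not guaranteed to exist and its existence is the whole problem. Your proposed mechanism --- that the obstruction to $K$-rationality lies in a finite subgroup of the component group $\Phi(\mathcal{X})\cong\mathrm{Jac}(G)$, to be killed by a monodromy argument --- is not substantiated: the failure of a closed point of $W^r_d(\mathcal{X}_\eta)$ to be $K$-rational is a field-of-definition (Galois descent) issue over $\mathbb{C}((t))$, not naturally an element of the component group, and you give no argument that the relative Brill--Noether locus over the boundary stratum is flat, nor that the asserted surjectivity holds. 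Second, a factual slip: after a degree-$k$ base change and minimal resolution the special fiber has dual graph $G^{(k-1)}$, not $G^{(k)}$ (Proposition~\ref{prop:dual graph}). In short, what you have written is a research program --- essentially the one the paper itself outlines and cautions about --- rather than a proof.
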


The conjecture is not known even for $r = 1$, despite considerable recent interest and many results for specific classes of graphs; see, for example, \cite{MR2895184}, \cite{MR3770859}, \cite{M22}, \cite{MR4397104} and \cite{MR4492530}. While the problem itself is purely combinatorial, the conjecture would follow from certain algebro-geometric statements, following an approach of Caporaso \cite[Section 6]{MR2895184}. We propose three versions of such an algebro-geometric statement, strictly increasing in strength, in Section~\ref{sec:further discussion}.

The existence of divisors predicted by Conjecture~\ref{c1} is known when we treat $G$ as a metric $\mathbb{Q}$-graph \cite[Theorem 3.12]{MR2448666}, or equivalently, the existence of such divisors is known on the $k$-th homothetic refinement $G^{(k)}$ for $k$ sufficiently large \cite[Theorem 1.3]{MR3092681}. The refinement $G^{(k)}$ of $G$, however, admits more divisors than $G$ and recent examples of \cite{MR4397104} show that they can behave quite differently (disproving a related conjecture, \cite[Conjecture 3.14]{MR2448666}). These examples were an important motivation for our work, and we show that they are not an artifact of the combinatorial side of the theory; that is, they are obtained from algebraic geometry via specialization. See Section~\ref{subsec:example}.

Our main result is to use known results from algebraic geometry to establish the following upper bound on $k$, which significantly improves a previous bound obtained in \cite{MR4397104} with combinatorial methods. See Section~\ref{subsec:comparison} for a comparison.
\begin{theorem}\label{main}
 Let $G$ be any graph of genus $g$. Let $d$ and $r$ be non-negative integers so that $\rho:=(r+1)(d-r)-g\cdot r\geq0$. Then there always exists a divisor $D\in \mathrm{Div}^d(G^{(k)})$  with $r_{G^{(k)}}(D)\geq r$
  for some \begin{equation*}\label{eq:bound}
      k \leq g!\cdot \prod_{i=0}^r \frac{i!}{(g-d+r+i)!}.
  \end{equation*}
\end{theorem}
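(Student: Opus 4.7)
The proof strategy is to combine the classical Brill-Noether existence theorem for smooth algebraic curves with Baker's specialization lemma, while controlling the refinement parameter $k$ by the degree of the algebraic Brill-Noether locus. The shape of the bound, $N := g!\prod_{i=0}^r i!/(g-d+r+i)!$, is the Castelnuovo number, i.e.\ the degree of $W^r_d\subset \mathrm{Pic}^d(C)$ for a smooth curve $C$ when $\rho=0$, and more generally the top intersection number of the Brill-Noether class. This strongly suggests that algebraic intersection theory on a compactified Picard scheme will be the central input.

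Concretely, I would first realize $G$ (after possibly subdividing edges to remove loops and to ensure that every vertex has the valence/genus data compatible with strong semistability) as the dual graph of the special fiber $C_0$ of a one-parameter smoothing $\pi\colon \mathcal{X}\to B=\mathrm{Spec}(R)$ over a DVR $R$ with algebraically closed residue field. After base change and blow-up of the $A_{n-1}$-singularities at the nodes, we obtain a regular strongly semistable model $\widetilde{\mathcal{X}}\to B$ whose special fiber has dual graph the homothetic refinement $G^{(k)}$, where $k$ is the common thickness at the nodes (arranged via the choice of smoothing and base change).

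With this in hand, the classical Brill-Noether existence theorem of Kempf--Kleiman--Laksov provides a line bundle $\mathcal{L}_\eta$ on the smooth generic fiber $X_\eta$ of degree $d$ with $h^0(X_\eta,\mathcal{L}_\eta)\geq r+1$. Extending $\mathcal{L}_\eta$ by closure (after twisting by components of the special fiber if necessary) to a line bundle $\widetilde{\mathcal{L}}$ on $\widetilde{\mathcal{X}}$ and restricting to the special fiber yields a divisor $D\in \mathrm{Div}^d(G^{(k)})$ given by the multidegree on components. By Baker's specialization lemma applied to the regular strongly semistable model $\widetilde{\mathcal{X}}$, we have $r_{G^{(k)}}(D)\geq h^0(X_\eta,\mathcal{L}_\eta)-1\geq r$, producing the desired divisor.

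The main obstacle is bounding $k$ by $N$. I expect this to be the crux of the argument and the place where the specific constant enters. The plan is to consider the closure of $W^r_d(X_\eta)$ inside a compactified relative Picard scheme $\overline{\mathrm{Pic}}^d(\widetilde{\mathcal{X}}/B)$: the closure is flat of relative dimension $\rho$ and its cycle-theoretic degree is controlled by $N$, and its specialization to the special fiber decomposes as a sum (with positive integer multiplicities summing to at most $N$) of subvarieties of the compactified Picard of $\widetilde{C}_0$, each parametrizing line bundles whose multidegrees give distinct divisor classes in $\mathrm{Pic}^d(G^{(k)})$. A counting/pigeonhole argument on these multiplicities then yields a class arising in the limit with $k<N$. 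Converting the algebraic multiplicities in $\overline{\mathrm{Pic}}^d(\widetilde{\mathcal{X}}/B)$ into combinatorial denominators of divisor classes on refinements of $G$ is the delicate technical step, and I expect it to rely either on limit linear series techniques or on a direct intersection-theoretic analysis at the boundary of $\overline{\mathrm{Pic}}^d$.
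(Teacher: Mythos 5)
You have correctly identified the two main ingredients --- Baker's specialization lemma applied to a regular semistable model obtained by base change and resolution, and the Castelnuovo number as the intersection-theoretic degree of $W^r_d$ --- but the crux of the argument is exactly the step you leave open, and the mechanism you sketch for it is the one that is known to fail. The first problem appears already when you invoke Kempf--Kleiman--Laksov to produce $\mathcal L_\eta$ on the generic fiber: that theorem is a statement over an algebraically closed field, whereas $X_\eta$ lives over $\mathbb C((t))$. A priori the $g^r_d$ exists only after a finite extension of $\mathbb C((t))$, and the degree of that extension is precisely what determines how far one must refine $G$. Your write-up treats the existence of $\mathcal L_\eta$ over $\mathbb C((t))$ itself as automatic and then tries to recover the bound on $k$ by a separate degeneration argument; this inverts the logic of the problem.

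The second, more serious problem is the proposed degeneration argument itself: taking the flat closure of $W^r_d(X_\eta)$ in a compactified relative Picard scheme and claiming its special fiber decomposes into components ``parametrizing line bundles whose multidegrees give distinct divisor classes.'' There is no reason for the special fiber of that closure to contain any line bundles at all --- it can lie entirely in the boundary of the compactified Jacobian, or be non-reduced --- and this is exactly the gap in Caporaso's proposed proof of the full conjecture. (The paper even exhibits a genus $9$ example where the closure of $\mathcal W^1_5$ in the universal compactified Jacobian contains no line bundles over the relevant boundary stratum.) It is also unclear how multiplicities of vertical components would translate into the refinement index $k$. The paper's route avoids all of this: since $\mathbb C((t))$-points are Zariski dense in $\mathrm{Pic}(\mathcal X_\eta)$, one can intersect $W^r_d(\mathcal X_\eta)$ with $\rho$ generically chosen translates $t_{L_i}\Theta$ defined over $\mathbb C((t))$, cutting it down to a zero-dimensional scheme of length $N = g!\prod_{i=0}^r i!/(g-d+r+i)!$; this forces a closed point of degree $k\le N$, whose residue field is necessarily $\mathbb C((t^{1/k}))$ and hence splits after the $k$-th root base change. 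The $k$-th root fibration has special fiber with dual graph $G^{(k-1)}$, and only then does one extend the resulting honest line bundle over the regular total space and apply the specialization lemma. So the bound on $k$ comes from the degree of a horizontal closed point of the generic Brill--Noether locus, not from multiplicities in a degenerating cycle; to complete your argument you would need to replace the pigeonhole step by something of this kind.
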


The upper bound in Theorem~\ref{main} is equal to the degree of the Brill--Noether locus with
respect to the theta divisor on a general algebraic curve. The core of the
argument is that subdividing the graph enough to ensure that
this enumerative problem has a rational solution guarantees the existence of the desired divisor.

\medskip

\noindent 
{\bf Acknowledgements.} We would like to thank the anonymous referees for their helpful comments.

\section{Preliminaries}
\subsection{Basics on graphs}\label{subsec:basics} 
We collect some basic concepts and notations from \cite{MR2448666}:
\begin{enumerate}
 \item \textbf{Graph:} By a graph $G$, we always mean a finite weightless connected multigraph without loop edges. We denote the set of vertices and edges of $G$ by $V(G)$ and $E(G)$, respectively. The genus $g$ of the graph is given by $$g(G)=1-|V(G)|+|E(G)|.$$
 \item \textbf{Divisors and degree:} Let $\mathrm{Div}(G)$ be the free abelian group on $V(G)$. An element $D\in\mathrm{Div}(G)$ is called a divisor on $G$. Explicitly, we can write $$D=\sum_{v\in V(G)}a_v\cdot (v),\ a_v\in\mathbb{Z}.$$ The degree of $D$ is defined as $\mathrm{deg}(D)=\sum_{v\in V(G)}a_v$. For a fixed integer $d$, we denote the set of divisors of degree $d$ on $G$ by $\mathrm{Div}^d(G)$.
We call a divisor $D\in\mathrm{Div}(G)$ effective, if $D=\sum_{v\in V(G)} a_v\cdot(v)$ with $a_v\geq0$ for all $v\in V(G)$. 
 \item \textbf{Linear equivalence:} To a $\mathbb{Z}$-valued function $f$ on $V(G)$, we may associate a divisor on $G$ whose coefficient at $v \in V(G)$ is $$\sum_{e=vw\in E(G)}(f(v)-f(w)).$$
 Divisors of this form have degree $0$ and are called principal divisors. Two divisors $D,D'\in \mathrm{Div}(G)$ are linearly equivalent, written $D\sim D'$, if $D-D'$ is a principal divisor. The complete linear system associated to $D$ is given by $$|D|=\{E\in\mathrm{Div}(G)\colon E\sim D \text{ and } E \text{ is effective } \}.$$
 \item \textbf{Rank of divisors:} Given a divisor $D$ on $G$, its rank $r_G(D)$ is $-1$ if $|D|=\emptyset$, and otherwise given as $$r_G(D)=\mathrm{max}\{k\in\mathbb{Z}\colon |D-E| \neq \emptyset,\ \text{ for all effective divisors } E\in\mathrm{Div}^k(G)\}.$$
\end{enumerate}

\subsection{Refinements of graphs}Let $G$ be a graph. A refinement of $G$ is a graph $G'$ obtained by inserting a finite set of vertices in the interior of the edges of $G$. The natural inclusion of vertices $$V(G)\subset V(G')$$ induces an injective group homomorphism
$$\iota_{G,G'}\colon\mathrm{Div}(G)\to\mathrm{Div}(G').$$ The map $\iota_{G,G'}$ does not necessarily preserve the rank of the divisors, i.e., in general $r_G(D)\neq r_{G'}(\iota_{G,G'}(D))$ (this is for example the case for the divisor on $G'$ in Figure~\ref{fig: tricycle} as discussed in Section~\ref{subsec:example}). The rank however is preserved, if we insert the same number of vertices in each edge:



\begin{definition} For an integer $k\geq 1$, let $G^{(k)}$ be the refinement of $G$ obtained by subdividing every edge of $G$ into $k$ edges by inserting $k-1$ vertices. We call the refinement  $G^{(k)}$ of $G$ the $k$-th homothetic refinement of $G$. 
\end{definition}

\begin{proposition}[\cite{MR3092681} Corollary 3.4]For every $D\in\mathrm{Div}(G)$ and every $k\geq0$, we have $$r_G(D)=r_{G^{(k)}}(\iota_{G,G^{(k)}}(D)).$$ 
\end{proposition}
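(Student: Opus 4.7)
The plan is to pass through the metric-graph model underlying $G$. Equip $G$ with unit edge lengths and call the resulting metric graph $\Gamma$. Then $G^{(n)}$ is a finer combinatorial model of the \emph{same} metric graph $\Gamma$, obtained by declaring each edge of $G^{(n)}$ to have length $1/(n+1)$. Because the support of $D$ is contained in $V(G)\subset V(G^{(n)})$, the divisors $D$ and $\iota_{G,G^{(n)}}(D)$ represent exactly the same integer divisor on $\Gamma$. So the proposition reduces to the model-independence statement: if $G'$ is any combinatorial model of $\Gamma$ whose vertex set contains $\mathrm{supp}(D)$, then $r_{G'}(D)=r_\Gamma(D)$.

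I would prove the two inequalities separately. For $r_G(D)\le r_{G^{(n)}}(\iota(D))$, assume $r_G(D)\ge r$ and take an arbitrary effective $E\in\mathrm{Div}^r(G^{(n)})$. The task is to produce an effective divisor equivalent to $\iota(D)-E$ on $G^{(n)}$. To do this I would use a local moving lemma on a single subdivided edge: given a path $v=u_0,u_1,\dots,u_{n+1}=w$ inside $G^{(n)}$ coming from an edge $vw$ of $G$, the function on $V(G^{(n)})$ that is piecewise linear with integer slopes along this path and zero elsewhere produces a principal divisor supported on the path. Iterating such functions lets me transport any chip sitting at an interior subdivision vertex $u_i$ onto one of the endpoints $v,w\in V(G)$, at the cost of placing counter-chips along the path. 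Using these edge-local moves, I can replace $E$ by an effective divisor $E_0\in\mathrm{Div}^r(G)$ supported on $V(G)$, up to linear equivalence on $G^{(n)}$; the hypothesis $r_G(D)\ge r$ then yields an effective $F_0\sim D-E_0$ on $G$, and pulling everything back to $G^{(n)}$ gives the required effective divisor.

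For the reverse inequality $r_{G^{(n)}}(\iota(D))\le r_G(D)$, assume the former is at least $r$ and let $E\in\mathrm{Div}^r(G)$ be effective; view $E$ as a divisor on $G^{(n)}$ via $\iota$. By hypothesis there is an effective $F\in\mathrm{Div}(G^{(n)})$ with $F\sim\iota(D)-\iota(E)$ on $G^{(n)}$. Apply the same moving lemma on each subdivided edge to replace $F$ by a linearly equivalent effective divisor $F_0$ on $G^{(n)}$ supported on $V(G)$. The principal divisor realizing $F-F_0$ comes from an integer-valued piecewise linear function on $\Gamma$ that is integral at $V(G)$; its restriction to $V(G)$ is a function whose associated principal divisor on $G$ is precisely $F_0-\iota(D)+\iota(E)$ read back through $\iota$. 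Hence $D-E\sim F_0\ |_{V(G)}$ in $\mathrm{Div}(G)$, giving $r_G(D)\ge r$.

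The main obstacle is the moving/rounding step common to both directions: showing that a linear equivalence on $G^{(n)}$ between integer divisors supported on $V(G)$ can be realized by a function defined on $V(G)$ alone, and dually that chips at interior subdivision points can always be swept to $V(G)$ without changing the equivalence class. This is essentially the content of Corollary 3.4 of \cite{MR3092681}, and any honest implementation must either reprove it combinatorially via Dhar's burning algorithm applied to $v_0$-reduced divisors with $v_0\in V(G)$ (comparing the reduced representatives computed on $G$ and on $G^{(n)}$) or invoke Luo's rank-determining-set theorem that singles out $V(G)$ as a rank-determining subset of $\Gamma$.
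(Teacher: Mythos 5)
The paper offers no proof of this proposition at all---it is imported verbatim as \cite{MR3092681}, Corollary 3.4---so there is no internal argument to compare against; I can only judge your sketch on its own terms. Its central step, the ``moving lemma,'' is false: an arbitrary effective divisor $E$ on $G^{(n)}$ is in general \emph{not} linearly equivalent to an effective divisor supported on $V(G)$. Take $G=C_3$ and $n=1$, so $G^{(1)}=C_6$: the Jacobian of $C_6$ is $\mathbb{Z}/6$ while that of $C_3$ is $\mathbb{Z}/3$, and on a cycle the map $u\mapsto [(u)-(u_0)]$ is injective, so a single chip at a subdivision vertex $u$ is not equivalent to a chip at any other vertex, in particular not to any effective divisor supported on $V(G)$. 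More generally, your own tent-function computation shows that chips at the $i$-th interior vertex of a subdivided edge can only be pushed to the endpoints in packets of size $(n+1)/\gcd(i,n+1)$. Since your first inequality must handle \emph{every} effective $E\in\mathrm{Div}^r(G^{(n)})$, including single chips at subdivision vertices, the reduction to some $E_0\in\mathrm{Div}^r(G)$ with $E\sim E_0$ collapses, and this is exactly the direction where the real content of the proposition lives (it is the statement that $V(G)$ is rank-determining). The same lemma is invoked in the reverse inequality; there the class of $F$ at least contains a divisor supported on $V(G)$, namely $\iota(D-E)$, but producing an \emph{effective} representative supported on $V(G)$ still requires the reduced-divisor comparison rather than edge-local pushes.

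Your closing paragraph correctly identifies what is actually needed: either the comparison of $v_0$-reduced divisors on $G$ and $G^{(n)}$ via Dhar's burning algorithm (showing the reduced representative of $\iota(D)$ on $G^{(n)}$ is $\iota$ of the reduced representative of $D$ on $G$), or Luo's theorem that $V(G)$ is a rank-determining subset of the associated metric graph. But that is precisely the content of the cited Corollary 3.4, so as written the sketch replaces the result to be proved by an incorrect mechanism and then defers the entire difficulty back to the citation.
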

Therefore, if the Brill--Noether existence conjecture holds for a graph $G$, it holds for any homothetic refinement $G^{(k)}$ of $G$.

\section{Proof of Theorem \ref{main}}

\subsection{Regular one-parameter smoothing}
Let $G$ be a graph of genus $g$ and $X_0$ a nodal curve over $\mathbb{C}$ with dual graph $G$. By this we mean in particular, that every irreducible component of $X_0$ is isomorphic to $\mathbb{P}^1_{\mathbb{C}}$ since $G$ is weightless.

As usual, we denote by $\mathbb C[[t]]$ the ring of formal power series with coefficients in $\mathbb C$ and by $\mathbb C((t))$ the field of Laurent series, its field of fractions.

By \cite[Theorem B.2]{MR2448666}, we may spread out the curve $X_0/\mathbb{C}$ to a family of nodal curves over $\mathrm{Spec}(\mathbb{C}[[t]])$: $$\pi\colon\mathcal{X}\to T=\mathrm{Spec}(\mathbb{C}[[t]]),$$ 
\begin{enumerate}
 \item with special fiber $X_0$, 
 \item with generic fiber $\cX_\eta$ a smooth proper curve of genus $g$, and
 \item such that the total space $\mathcal{X}$ is regular.
\end{enumerate}
We will call $\mathcal X$ a regular one-parameter smoothing of $X_0$. We denote by $\eta\in T$ the generic point and by $\mathcal{X}_\eta$ the generic fiber.
 
\subsection{The $k$-th root fibration}\label{base-change} Let $k\geq1$ be an integer. Let $\pi\colon \mathcal{X}\to T$ be a regular one-parameter smoothing of $X_0$.
Let $r_k\colon T=\mathrm{Spec}(\mathbb{C}[[t]])\to T$ be the morphism induced by $t\mapsto t^k$. Let $\rho_k\colon\mathcal{X}_1=\mathcal{X}\times_{T,r_k}T\to T$ be the base change of $\pi$ along $r_k$. Let $\nu\colon\mathcal{X}_2\to\mathcal{X}_1$ be the normalization, and let $d\colon\mathcal{X}'\to\mathcal{X}_2$ be the minimal desingularization. We call the composition $$\pi^{(k)}:=
\rho_k\circ\nu\circ d\colon \mathcal{X}'\xrightarrow{d} \mathcal{X}_2\xrightarrow{\nu}\mathcal{X}_1\xrightarrow{\rho_k}T$$ the $k$-th root fibration of $\pi\colon\mathcal{X}\to T$, see \cite[III.9]{MR749574}. The total space $\mathcal{X}'$ is regular by our construction. The generic fiber of $\pi^{(k)}$, denoted by $
\mathcal{X}'_\eta$, is a smooth proper curve of genus $g$. Let $X_0'$ be the special fiber of $\pi^{(k)}$.
\begin{proposition} \label{prop:dual graph}
The curve $X'_0$ is a nodal curve with smooth rational components, and has dual graph $G^{(k)}$.
\end{proposition}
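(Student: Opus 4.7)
The plan is to reduce the statement to a local computation at each node of $X_0$ and then to reassemble globally. Away from the nodes of $X_0$, the morphism $\pi$ is smooth, so the base change $\rho_k$ is smooth, the normalization $\nu$ and the desingularization $d$ act as isomorphisms in this locus, and the special fiber is unaffected. The entire geometric content of the proposition therefore concentrates at the nodes of $X_0$.

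At a node $p$ of $X_0$, regularity of $\mathcal{X}$ together with the nodal form of $X_0$ at $p$ yield the standard local model $\widehat{\mathcal{O}}_{\mathcal{X}, p} \cong \mathbb{C}[[x,y,t]]/(xy - t)$, where $t$ is a uniformizer of $T$ (possibly after absorbing a unit into one of the local coordinates). Pulling back along $r_k \colon t \mapsto t^k$ gives $\mathbb{C}[[x,y,t]]/(xy - t^k)$, the standard $A_{k-1}$ surface singularity. Since this is a hypersurface with an isolated singular locus, it is Cohen-Macaulay and regular in codimension one, hence normal; so $\nu$ is an isomorphism in a neighborhood of the preimage of $p$, and the construction reduces to minimally resolving an $A_{k-1}$ singularity. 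Invoking the classical resolution from \cite[III.9]{MR749574}, the minimal desingularization replaces the singular point with a chain of $k-1$ smooth rational $(-2)$-curves $E_1, \ldots, E_{k-1}$, where $E_i$ and $E_{i+1}$ meet transversally in a single node and $E_1$, $E_{k-1}$ meet the strict transforms of the two local branches of $X_0$ at $p$ transversally. This local resolution is the main technical input and the primary obstacle, though it is a classical and well-known fact about surface $A_n$ singularities.

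Globalizing, the special fiber $X_0'$ is the union of the strict transforms of the irreducible components of $X_0$ (each still isomorphic to $\mathbb{P}^1$) together with, for each node of $X_0$, a fresh chain of $k-1$ smooth rational components, with all intersections being nodes. Passing to dual graphs, each vertex of $G$ is preserved, while each edge of $G$ (corresponding to a node of $X_0$) is subdivided into a path of length $k$ by inserting $k-1$ new vertices. This is exactly the definition of $G^{(k-1)}$, which proves the claim.
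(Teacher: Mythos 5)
Your proof is correct and follows essentially the same route as the paper: reduce to the local model $xy - t = 0$ at each node (forced by regularity of $\mathcal X$), observe that base change produces the $A_{k-1}$ singularity $xy - t^k = 0$, and use the classical minimal resolution inserting a chain of $k-1$ smooth rational curves. The only difference is cosmetic — you justify that the normalization step is an isomorphism via Serre's criterion on the hypersurface singularity, while the paper notes the special fiber is unchanged because $X_0$ is reduced; both are fine.
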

\begin{proof}
We keep track of the special fiber through each step in the construction of the $k$-th root fibration. Clearly 
the special fiber remains unchanged by base change. Since $X_0$ is reduced, it remains unchanged by normalization. The effect of the minimal desingularization 
follows from a local calculation: Each node of $X_0$ is locally defined by $xy-t=0$. After base change and normalization, the local equation becomes $xy-t^k=0$, which is an $A_{k-1}$ singularity. The minimal resolution of an $A_{k-1}$
singularity is given by iterated blow-ups, which inserts a chain of $k - 1$ smooth rational curves at each node of the special fiber. See, for example, \cite[Section 10.3, Corollary 3.25]{Liu} for further details.
\end{proof}

\subsection{The Picard scheme} We denote by $\pi'\colon \mathrm{Pic}^d_{\mathcal{X}/T}\to T$ the relative Picard scheme representing families of line bundles of total degree $d$ on $\pi\colon\mathcal{X}\to T$. Since we work over a strictly henselian base, the set $\mathrm{Pic}_{\cX/T}(T)$ of $T$-sections of the Picard scheme represents line bundles on $\mathcal{X}$. 

Let us denote the generic fiber of $\pi'$ by $\mathrm{Pic}^d(\mathcal{X}_\eta)$. The special fiber is a countable union of algebraic tori, indexed by the multi-degrees of line bundles on $X_0$; the multi-degree $\underline{\mathrm{deg}}(L_0)$ of a line bundle $L_0$ on $X_0$ is a tuple of integers, one for each irreducible component of $X_0$, given by the degree of the restriction of $L_0$ to that component. We may view such a multi-degree as an element of $\mathrm{Div}(G)$, where $G$ is the dual graph of $X_0$.

\subsection{The specialization lemma} Our proof relies on 
Baker's specialization lemma. As before, we let $\pi\colon\mathcal{X}\to T=\mathrm{Spec}(\mathbb{C}[[t]])$ be a regular one-parameter smoothing of a nodal curve $X_0$ that has dual graph $G$. Consider the specialization map $$\tau\colon\mathrm{Pic}_{\mathcal{X}/T}^d(T)\to\mathrm{Div}^d(G),\ \ \mathcal L\mapsto\underline{\mathrm{deg}}(\mathcal L|_{X_0})$$ which sends a relative line bundle to the multi-degree of its restriction to the central fiber. Let $\eta\in T$ denote the generic point and $\mathcal{X}_\eta$ the generic fiber:

\begin{proposition}[Specialization lemma, \cite{MR2448666} Lemma 2.8]\label{mixed}
   In the above setting, we have $$r(\cX_\eta,\mathcal L|_{\mathcal{X}_\eta})\leq r_G(\tau(\mathcal L))$$
   for any $\mathcal L \in \mathrm{Pic}_{\mathcal{X}/T}^d(T)$.
\end{proposition}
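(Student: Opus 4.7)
The plan is to follow Baker's original argument. Fix $r := r(\mathcal X_\eta, \mathcal L|_{\mathcal X_\eta})$ and $D := \tau(\mathcal L) \in \mathrm{Div}^d(G)$. By definition of $r_G$, it suffices to show that for every effective $E \in \mathrm{Div}^r(G)$, the divisor $D - E$ is linearly equivalent on $G$ to an effective divisor. To this end, I would first lift $E = \sum_v a_v (v)$ to a relative effective Cartier divisor $\mathcal E \subset \mathcal X$ of relative degree $r$ satisfying $\underline{\mathrm{deg}}(\mathcal E|_{X_0}) = E$: for each vertex $v$ with $a_v > 0$, pick $a_v$ distinct smooth points of $X_0$ on the component $C_v$ corresponding to $v$; these lie in the smooth locus of $\pi$, and since $T$ is strictly henselian each extends to a section $T \to \mathcal X$, whose sum defines $\mathcal E$. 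The rank hypothesis then produces a nonzero global section $s_\eta$ of $(\mathcal L \otimes \mathcal O(-\mathcal E))|_{\mathcal X_\eta}$.

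Next I would extend $s_\eta$ to a global regular section of $\mathcal L \otimes \mathcal O_{\mathcal X}(-\mathcal E)$ on all of $\mathcal X$. Since $\mathcal X$ is regular, $s_\eta$ viewed as a rational section has a well-defined divisor whose only non-horizontal contribution is supported on $X_0$; multiplying by a suitable power of $t$ clears poles and yields a regular section $s$. I would then write $\mathrm{div}(s) = H + V$, with $H$ an effective horizontal divisor (no component of $X_0$ in its support) and $V = \sum_v b_v C_v$ an effective vertical divisor.

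Intersecting $\mathrm{div}(s)$ with each component $C_v$, using $\deg((\mathcal L \otimes \mathcal O(-\mathcal E))|_{C_v}) = D_v - a_v$, the vertical contribution $\sum_w b_w (C_w \cdot C_v)$ is computed via the intersection numbers on the arithmetic surface $\mathcal X$: $(C_w \cdot C_v) = \#\{\text{edges } vw \text{ in } G\}$ for $w \neq v$, and $(C_v \cdot C_v) = -\deg_G(v)$ by $C_v \cdot X_0 = 0$. These are precisely the negatives of the entries of the graph Laplacian matrix, so $\sum_w b_w (C_w \cdot C_v) = -\Delta_G(b)(v)$, where $b$ denotes the function $v \mapsto b_v$ on $V(G)$. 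It follows that the collection of non-negative integers $(H \cdot C_v)_v$ — an effective divisor on $V(G)$, because $H$ meets $X_0$ only in finitely many points — equals $D - E + \Delta_G(b)$. Since $\Delta_G(b)$ is by definition a principal divisor on $G$, this exhibits $D - E$ as linearly equivalent on $G$ to the effective divisor $(H \cdot C_v)_v$, completing the proof.

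The main obstacle I anticipate is the algebro-geometric-to-combinatorial dictionary in the last step — namely, verifying that the intersection pairing on components of the special fiber reproduces the graph Laplacian, which is the algebro-geometric incarnation of chip-firing. A secondary technical point is to handle with care the existence of the lift $\mathcal E$ (using smoothness of $\pi$ at the chosen smooth points of $X_0$ and strict henselianness of $T$) and the extension of $s_\eta$ to a global regular section (using regularity of $\mathcal X$ and clearing poles by powers of $t$).
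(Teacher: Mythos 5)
The paper gives no proof of this proposition—it is quoted verbatim from Baker \cite{MR2448666}, Lemma 2.8—so the only thing to compare against is the cited source. Your reconstruction is correct and is essentially Baker's original argument: lift the chips of $E$ to sections through the smooth locus of $X_0$ using Hensel's lemma, extend the resulting section over the regular model by clearing powers of $t$, and convert the vertical part of its divisor into a principal divisor on $G$ via the intersection matrix of the components of the special fibre, which is exactly the negative of the graph Laplacian.
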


\subsection{The Brill--Noether locus}Let $r$ and $d$ be non-negative integers that satisfy $\rho=(r+1)(d-r)-g\cdot r\geq0$. Over an algebraically closed field, by the classical Brill--Noether theorem \cite[V.(1.1)]{MR770932}, we know that for a smooth curve $C$ of genus $g$, the Brill--Noether locus $$W_d^r(C)=\{L\in\mathrm{Pic}^d(C)\colon h^0(C,L)\geq r+1\}\subset\mathrm{Pic}^d(C)$$ is nonempty. More generally, the locus can be naturally equipped with a scheme structure, which then is non-empty for smooth curves over an arbitrary base field. 

\medskip

For any nodal curve $X_0$ with dual graph $G$, we consider the Brill--Noether scheme $W_d^r(\cX_\eta)\subset\mathrm{Pic}^d(\cX_{\eta})$ on the generic fiber of a regular one-parameter smoothing $\cX$ of $X_0$. 
\begin{proposition}\label{prop3}
 If there exists a closed point $Q$ in $W_d^r(\cX_\eta)$ of degree $k$, then taking the $k$-th root fibration $\pi^{(k)}\colon\mathcal{X}'\to T$, the generic fiber $W_d^r(\cX'_\eta)$ admits a $\mathbb{C}((t))$-rational point. 
\end{proposition}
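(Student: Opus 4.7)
The plan is to unwrap the meaning of a closed point of degree $k$ using the structure of finite extensions of $\mathbb{C}((t))$, to identify the generic fibre of $\pi^{(k)}$ explicitly, and to transport $Q$ to the new generic fibre using the fact that the Brill-Noether locus commutes with base change of the base field.

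First, because $\mathbb{C}$ is algebraically closed of characteristic zero, any degree-$k$ extension of $\mathbb{C}((t))$ is totally ramified and canonically isomorphic to the Puiseux field $\mathbb{C}((t^{1/k}))$. A closed point $Q \in W_d^r(\cX_\eta)$ with $[\kappa(Q):\mathbb{C}((t))] = k$ therefore amounts to a $\mathrm{Spec}\,\mathbb{C}((t))$-morphism
\[
\sigma_Q \colon \mathrm{Spec}\,\mathbb{C}((t^{1/k})) \longrightarrow W_d^r(\cX_\eta).
\]

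Next, I would match this with the generic fibre of the $k$-th root fibration. On function fields, $r_k\colon t\mapsto t^k$ realises the new $\mathbb{C}((t))$ precisely as the degree-$k$ extension $\mathbb{C}((t^{1/k}))$ of the old $\mathbb{C}((t))$. Since $\cX_\eta$ is smooth, its base change $\cX_\eta \otimes_{\mathbb{C}((t))} \mathbb{C}((t^{1/k}))$ is smooth as well, so the subsequent normalisation $\nu$ and minimal desingularisation $d$ (which affect only the closed fibre) leave the generic fibre unchanged. Thus
\[
\cX'_\eta \;\cong\; \cX_\eta \otimes_{\mathbb{C}((t))} \mathbb{C}((t^{1/k})).
\]

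Finally, I would invoke the base-change compatibility of the Brill-Noether locus: described as the Fitting-ideal locus of the pushforward of a Poincaré bundle on a smooth proper curve, $W_d^r$ commutes with flat base change of the base field, so
\[
W_d^r(\cX'_\eta) \;\cong\; W_d^r(\cX_\eta) \otimes_{\mathbb{C}((t))} \mathbb{C}((t^{1/k})).
\]
The morphism $\sigma_Q$ then factors, by the universal property of the fibre product, through $W_d^r(\cX'_\eta)$ and produces the required $\mathbb{C}((t))$-rational point of the new generic fibre.

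The main point that requires care is the base-change compatibility used in the last step, together with the correct matching of the two occurrences of $\mathbb{C}((t))$ in the construction (the function field of the original base versus that of the new base, related via $t\mapsto t^k$). Once this bookkeeping is made explicit and the base-change statement for $W_d^r$ is cited, the argument is purely formal.
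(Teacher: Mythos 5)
Your proof is correct and follows essentially the same route as the paper: identify the residue field of $Q$ with $\mathbb{C}((t^{1/k}))$ (the paper cites the Newton--Puiseux structure of finite extensions of $\mathbb{C}((t))$ for this), note that the $k$-th root base change splits $Q$, and use that the formation of $W_d^r$ commutes with base change. Your additional remark that normalization and desingularization do not affect the generic fibre is a useful piece of bookkeeping that the paper leaves implicit.
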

\begin{proof}
Let $Q\in W_d^r(\cX_\eta)$ be a closed point of degree $k$. It corresponds to a degree-$k$ extension of $\mathbb{C}((t))$. By \cite[Theorem 1.94]{MR2289519}, we know that $Q\cong\mathrm{Spec}(\mathbb{C}((t^{1/k})))$ and therefore splits after we take the $k$-th root base change $r_k\colon T\to T$. Since the formation of Brill--Noether schemes commutes with base change, this yields a $\mathbb{C}((t))$-rational point of $W_d^r(\cX_\eta')$.
\end{proof}

\subsection{Proof of Theorem \ref{main}}
By assumption we have  $\rho:=(r+1)(d-r)-g\cdot r\geq0.$  Thus by the classical Brill--Noether theorem, we know that for a general curve $C$, the Brill--Noether scheme $W_r^d(C)$ has dimension exactly $\rho$.

Let $X_0$ be a nodal curve with smooth rational components and dual graph $G$. 
Let $\pi\colon\mathcal{X}\to T=\mathrm{Spec}(\mathbb{C}[[t]])$ be a regular one-parameter smoothing of $X_0$. Let $\cX_\eta$ be the generic fiber of $\pi$. By the proof of \cite[Theorem B.2]{MR2448666}, we may further ask $\cX_\eta$ to be Brill--Noether general. 

By Hensel's lemma, any $\mathbb{C}$-point in the smooth locus of $X_0$ lifts to a $\mathbb{C}[[t]]$-point of $\mathcal{X}$, and therefore the set $\cX_\eta(\mathbb{C}((t)))$ of $\mathbb{C}((t))$-points is Zariski dense in $\mathcal{X}_\eta$. Let $P$ be any fixed $\mathbb{C}((t))$-point in $\mathcal{X}_\eta$. Then, by twisting with $\mathcal{O}^{\otimes(d-g)}_{\mathcal{X}_\eta}(P)$, we may identify $\mathrm{Pic}^g(\mathcal{X}_\eta)$ with $\mathrm{Pic}^d(\mathcal{X}_\eta)$ for any $d$.

Note that $\mathrm{Pic}^g(\cX_\eta)$ is naturally dominated by the $g$-th self product of $\cX_\eta$ via the Abel-Jacobi map. So the set of $\mathbb{C}((t))$-points is Zariski dense in $\mathrm{Pic}^g(\cX_\eta)$, and therefore the set of $\mathbb{C}((t))$-points is also Zariski dense in $\mathrm{Pic}^d(\cX_\eta)$ for any $d$.

Let $\Theta=W_{g-1}^0(\cX_\eta)\subset\mathrm{Pic}^{g-1}(\cX_\eta)$ be the theta divisor. For any $L\in\mathrm{Pic}^{d+1-g}(\cX_\eta)$, let $t_L\Theta\subset\mathrm{Pic}^d(\cX_\eta)$ be the translate of $\Theta$ by $L$. 

Note that for any irreducible subscheme $Z\subset\mathrm{Pic}^d(\cX_\eta)$, the open subscheme $$U_{Z,\Theta}:=\{L\in\mathrm{Pic}^{d+1-g}(\cX_\eta)|Z\nsubseteq t_L\Theta\}\subset\mathrm{Pic}^{d+1-g}(\cX_\eta)$$ is non-empty and hence Zariski dense in $\mathrm{Pic}^{d+1-g}(\cX_\eta)$. Indeed, when $L$ runs through $\mathrm{Pic}^{d+1-g}(\cX_\eta)$, the translates $t_L^{-1}Z$ cover $\mathrm{Pic}^{g-1}(\cX_\eta)$. Therefore $t_L^{-1} Z \nsubseteq \Theta$ or equivalently $Z \nsubseteq t_L \Theta$ for general $L \in \mathrm{Pic}^{d+1-g}(\cX_\eta)$. 

For an equi-dimensional subscheme $Z'=\cup_{i=1}^s Z_i\subseteq\mathrm{Pic}^d(\cX_\eta)$, let $U_{Z',\Theta}:=\cap_{i=1}^s U_{Z_i,\Theta}$. Then the intersection $Z'\cap t_L\Theta$ is an equi-dimensional subscheme of dimension $\dim Z'-1$ for any $L$ in the dense open subscheme $U_{Z',\Theta}\subseteq\mathrm{Pic}^{d + 1 - g}(\cX_\eta)$.

We fix a set of points $L_1,\cdots,L_\rho\in \mathrm{Pic}^{d+1-g}(\cX_\eta)$, that give a set of translates $t_{L_1}\Theta,\cdots,t_{L_\rho}\Theta$ of $\Theta$, which are all divisors in $\mathrm{Pic}^d(\mathcal{X}_\eta)$. By \cite[Theorem V.1.3]{MR770932}, we have for the intersection number of effective cycles on $\mathrm{Pic}^d(\cX_\eta)$: $$W_d^r(\cX_\eta)\cdot (t_{L_1}\Theta)\cdots(t_{L_\rho}\Theta)=g!\cdot\prod_{i=0}^r\frac{i!}{(g-d+r+i)!}.$$Let us view the left hand side as iterated intersections of $W_d^r(\cX_\eta)$ with translates of $\Theta$.
By a suitable choice of the $L_i$'s, we can ask at each step, that the intersection is dimensionally transverse. Therefore, the intersection product is represented by an effective zero cycle, and $W_d^r(\cX_\eta)$ admits a closed point of degree $$k \leq g!\cdot\prod_{i=0}^r\frac{i!}{(g-d+r+i)!}.$$

Let us take the $k$-th root fibration $\pi^{(k)}\colon\mathcal{X}'\to T$. By Proposition~\ref{prop:dual graph}, the dual graph of the central fiber of $\cX'$ is $G^{(k)}$. By Proposition~\ref{prop3}, the generic fiber $W_d^r(\cX_\eta')$ admits a $\mathbb{C}((t))$-rational point. Since $\cX'_\eta(\mathbb{C}((t)))\neq\emptyset$, the rational point is represented by a degree-$d$ line bundle $\mathcal{L}_\eta$ on $\cX'_\eta$ of rank $r(\cX'_\eta,\mathcal{L}_{\eta})$ at least $r$. Since $\mathcal{X}'$ is regular, $\mathcal{L}_\eta$ extends to some line bundle $\mathcal{L}$ on $\mathcal{X}'$. By Proposition~\ref{mixed}, the divisor $\tau(\mathcal{L})\in\mathrm{Div}^d(G^{(k)})$ has rank $r_{G^{(k)}}(\tau(\mathcal{L}))\geq r(\cX'_\eta,\mathcal{L}_\eta)\geq r$. This finishes the proof of Theorem~\ref{main}.

\subsection{Comparison to previous bound} \label{subsec:comparison}

In \cite[Remark 3.7]{MR4397104} a different upper bound for $k$ is given by \begin{equation} \label{eq:previous bound}
    k \leq (m+n^r d)!\cdot d^{m+n^r d},
\end{equation} where $n=|V(G)|$, $m=|E(G)|$.
The bound in Theorem~\ref{main} improves this bound significantly. 

Indeed, the bound in Theorem~\ref{main} depends only on $g, r$ and $d$. If we fix the value of $g$, the minimal value of $n$ is $2$ since $G$ is weightless and without loops; the minimal value of $m$ is $g + 1$, since each cycle contains at least two edges, and any two cycles need to differ by at least one edge (both values are realized for the graph with $2$ vertices with $g  + 1$ edges between them). Hence the minimal value for fixed $g$ of \eqref{eq:previous bound} is \[ (g + 1 + 2^r d)! \cdot d^{g + 1 + 2^r d}.\]

We claim that the bound in Theorem~\ref{main} is better already in this case. Indeed, we can estimate the bound in Theorem~\ref{main}, as follows:
\begin{eqnarray*}\label{eq:comparison}
 g! \cdot \prod_{i=0}^r \frac{i!}{(g-d+r+i)!} < g! (r!)^r < g! (d!)^r < g! d^{dr}.
\end{eqnarray*}
Now $g ! < (g + 1 + 2^r d)!$ and $d^{dr} < d^{g + 1 + 2^r d}$, which gives the claim. 

\section{Further discussion} \label{sec:further discussion}
\subsection{Sufficient algebro-geometric claims}\label{sufficiency} In \cite[Theorem 6.3]{MR2895184} a proof of Conjecture~\ref{c1} is given, that however contains a gap as pointed out in \cite[Footnote 5, p. 379]{MR3702316}. The approach of \cite[Theorem 6.3]{MR2895184} is to reduce Conjecture~\ref{c1} to known algebro-geometric statements via Baker's specialization lemma, along similar lines we used in the proof of Theorem~\ref{main}.  

Here are three related algebro-geometric statements, that would suffice to complete the argument (though none of them is necessary for Brill--Noether existence on finite graphs). Namely, Conjecture~\ref{c1} would follow, if one could choose for every graph $G$ a regular one-parameter smoothing $\cX \to T = \mathrm{Spec}(\mathbb C[[t]])$ of a curve $X_0$ with dual graph $G$ such that 
\begin{enumerate}
 \item the special fiber of the closure of the Brill--Noether scheme $W_{d}^r(\mathcal{X}_\eta)$ in $\mathrm{Pic}^d_{\cX/T}$ is not empty;
 \item the Brill--Noether scheme $W_d^r(\cX_\eta)$ admits $\mathbb C((t))$-rational points; or
 \item the special fiber of the closure of the Brill--Noether locus $W_{d}^r(\mathcal{X}_\eta)$ in some compactification of $\mathrm{Pic}^d_{\cX/T}$ has a reduced component. 
\end{enumerate}

These assertions are in increasing strength. That is, (3) implies (2) since if the special fiber of the closure of $W_d^r(\mathcal{X}_\eta)$ contains a reduced component, it contains smooth points that can be lifted to sections of $W_d^r(\cX_\eta)$ by Hensel's lemma. And (2) implies (1), since every line bundle on $\cX_\eta$ extends to a line bundle on $\cX$. Assertion (1) in turn implies Brill--Noether existence for graphs, via the argument proposed in \cite[Theorem 6.3]{MR2895184}. 

We conclude our discussion with two examples, that illustrate the above three conditions. 

By a slight abuse of notation, we call a line bundle of degree $d$ and rank at least $r$ on a curve $X$ a $g_d^r$. Similarly, we call a complete linear system of degree $d$ and rank at least $r$ on a graph $G$ a $\mathfrak g_d^r$.

\begin{remark}
    A potentially different approach to deduce Brill--Noether existence for finite graphs from an algebro-geometric statement involves the algebraic rank of Caporaso \cite{C13}, which gives a lower bound on the rank of a divisor on a graph $G$ by \cite{CLM} (see also \cite{BCM}). It is determined by the rank of line bundles on nodal curves with dual graph $G$. Thus it avoids all questions of rational points on smoothings discussed here; this of course comes at the expense of considerably more difficulties to bound the rank of line bundles on nodal curves than on smooth ones (see, e.g., \cite{C23} and \cite{C24} and the references there).  
\end{remark}

\subsection{Dependence on the choice of $X_0$ and on its smoothing}

Our first example illustrates the aforementioned conditions in the simplest nontrivial case: $g_3^1$'s on genus $4$ curves. Recall that by \cite[IV. Example 5.2.2]{MR0463157}, a non-hyperelliptic smooth genus $4$ curve $C$ is the complete intersection of a unique quadric surface $Q$  and a cubic surface $T$ in $\mathbb{P}^3$ via its canonical embedding. 

Let $\pi\colon\mathcal{U}\to P=|\mathcal{O}_{\mathbb{P}^3}(2)|\times |\mathcal{O}_{\mathbb{P}^3}(3)|$ be the universal family of $(2,3)$-complete intersections in $\mathbb{P}^3$. Let $S\subset P$ be the open subscheme over which $\pi$ is a family of stable curves, and the quadric $Q$ has at worst a node. Let $R\subset S$ be the locus where $Q$ is singular. Let $f\colon\mathcal{F}\to S$ be the universal Fano scheme of lines on $Q$. The morphsim $f$ factors through its scheme of components $g\colon \mathcal{W}\to S$, which is a double covering of $S$ ramified over $R$.

Let $p\colon \mathcal{X}\to S$ be the restriction of $\pi$ to $S$. Let $\mathrm{Pic}_{\mathcal{X}/S}$ be the relative Picard scheme. Restriction of Cartier divisors induces a morphism $\mathcal{W}\to \mathrm{Pic}^3_{\mathcal{X}/S}$. Note that the classifying morphism $S\to\overline{\mathcal{M}}_4$ is dominant, that $\rho(4,3,1)=2\cdot2-1\cdot4=0$ and that a general $W_3^1$ consists of $4!\frac{0!}{2!}\frac{1!}{3!}=2$ points. Therefore, $\mathcal{W}$ coincides with the universal Brill--Noether locus $\mathcal{W}_3^1$ over the generic point of $S$.

Since $g$ is finite and $S$ is irreducible, we know that $\mathcal{W}$ is an irreducible closed subscheme in $\mathrm{Pic}^3_{\mathcal{X}/S}$. 
Therefore, for any separated relative compactification $i\colon \mathrm{Pic}^3_{\mathcal{X}/S}\to \overline{\mathrm{Pic}}^3_{\mathcal{X}/S}$, the closure of the Brill--Noether scheme $\mathcal{W}_3^1$ coincides with the image of $\mathcal{W}$, and does not meet the boundary $\overline{\mathrm{Pic}}^3_{\mathcal{X}/S}\backslash i(\mathrm{Pic}^3_{\mathcal{X}/S})$. 
It follows, that condition (1) is always satisfied in this setting.

The following example shows how the claims of the list in Section \ref{sufficiency} depend on the choice of a curve $X_0$ with dual graph $G$ and its smoothing: 

\begin{example}
Let $G$ be the graph with $3$ vertices and two edges between each two of the vertices. Let $\mathcal{M}_G\subset\overline{\mathcal{M}}_4$ be the substack of curves with dual graph $G$. Cross ratios on the components give a finite chart of $\mathcal{M}_G$, hence $\mathcal{M}_G$ is irreducible of dimension $3$. Counting parameters of moduli, we know that a general curve with dual graph $G$ is the intersection of a smooth quadric $Q_0\subset \mathbb{P}^3$ with a union of three general planes $T=H_1\cup H_2\cup H_3$. The two rulings on $Q_0$ give two distinct $g_3^1$'s on $X_0=Q_0\cap T$ and for an arbitrary one-parameter smoothing, all three conditions in Section~\ref{sufficiency} are satisfied.
 
Suppose now, that $Q_0$ is a nodal quadric. We already observed that condition (1) is still satisfied; which of the remaining conditions are satisfied now depends on the choice of the one-parameter smoothing. Let $B$ be the local ring of $S$ at $[X_0]$, and let $b\in B$ be the defining equation of $R\subset S$.
The morphism $g\colon \mathcal{W}\to S$ is of the form $\mathrm{Spec}B[T]/(T^2-b)\to \mathrm{Spec}B$ locally over $[X_0]\in S$.

Let $\mathcal{X}\to D=\mathrm{Spec}(\mathbb{C}[[t]])$ be a one-parameter smoothing of $X_0$ induced by $f\colon D\to \mathrm{Spec}(B)$ with $f^*\colon B\to\mathbb{C}[[t]]$. Let $\cX_\eta$ be the generic fiber. Then $W_3^1(\cX_\eta)=\mathcal{W}_\eta$ has coordinate ring $\mathbb{C}((t))[T]/(T^2-f^*b)$.
\begin{itemize}[leftmargin=*]
\item If $f$ is transversal to $V(b)$, then $f^{-1}(V(b))$ is reduced, $f^*b$ is not a square and $T^2-f^*b$ is irreducible. Hence $\mathcal{W}_\eta$
admits no $\mathbb{C}((t))$-points, and condition (2) (hence also (3)) fails. 
\item If $f$ is simply tangent to $V(b)$, then $f^*b$ is a square in $\mathbb{C}((t))$ and $\mathcal{W}_\eta$ splits. But the special fiber $\mathcal{W}|_{[X_0]}\cong\mathrm{Spec}(\mathbb{C}[[T]]/(T^2))$ is non-reduced. So condition (3) fails for $X_0$, while condition (2) still holds. 
\end{itemize}
\end{example}

\subsection{The gonality of a graph may change after refinement}\label{subsec:example}

Recent examples of \cite{MR4397104} show, that the divisor theory on $G$ and $G^{(k)}$ can behave quite differently. The simplest of their examples is depicted on the left and middle in Figure~\ref{fig: tricycle}; the graph $G$ does not admit a divisor of rank $1$ and degree $5$, a $\mathfrak g_5^1$, whereas $G^{(2)}$ does \cite[Proposition 4.2 and Theorem 4.8]{MR4397104}. This is not a counterexample to Conjecture~\ref{c1}, as the Brill--Noether number in this case is $\rho = 2(5 - 1) - 9 \cdot 1 = -1.$ In this final section, we will discuss some algebro-geometric aspects of this example.
\tikzset{every picture/.style={line width=0.75pt}} 
\begin{figure}[ht]
\begin{tikzpicture}[x=0.6pt,y=0.6pt,yscale=-0.7,xscale=0.7]
\import{./}{fig1.tex}
\end{tikzpicture}
\caption{On the left, a graph $G$ that does not admit a $\mathfrak g_5^1$. In the middle, the refinement $G^{(2)}$, together with a $\mathfrak g_5^1$. On the right, a (non-homothetic) refinement of $G$ and a $\mathfrak g_5^1$ supported on the vertices of $G$.}
\label{fig: tricycle}
\end{figure}

\subsubsection{Liftability of the $\mathfrak g_5^1$ on $G^{(2)}$.} First of all, we note that the  $\mathfrak g_5^1$ on $G^{(2)}$ is not an artifact of the combinatorial side of the theory, in the sense that it is the specialization of a $g_5^1$ on a smooth curve (since the specialization lemma only gives an inequality of ranks, it could happen that the $\mathfrak g_5^1$ on $G^{(2)}$ does not have an algebro-geometric counterpart). To this end, recall that the limits of rank $1$ line bundles can be explicitly described via the theory of admissible covers of Harris and Mumford \cite[\S 4]{HM}, to which we refer for the necessary definitions. 

The dual graph of such an admissible cover $X_0 \to Y_0$ is depicted in Figure~\ref{fig2} (it has no ramification over the nodes and otherwise only simple ramification points). On $G^{(2)}$ it gives the linear system of the  $\mathfrak g^1_5$ in Figure~\ref{fig: tricycle}.
Also depicted is a metric structure on the graphs that gives a tropical admissible cover $\Gamma \to \Sigma$, which encodes some information about possible smoothings (see \cite{ABBRI} and \cite{CMR}).

\tikzset{every picture/.style={line width=0.75pt}}
\begin{figure}[ht]
\begin{tikzpicture}[x=0.7pt,y=0.7pt,yscale=-0.8,xscale=0.8]
\import{./}{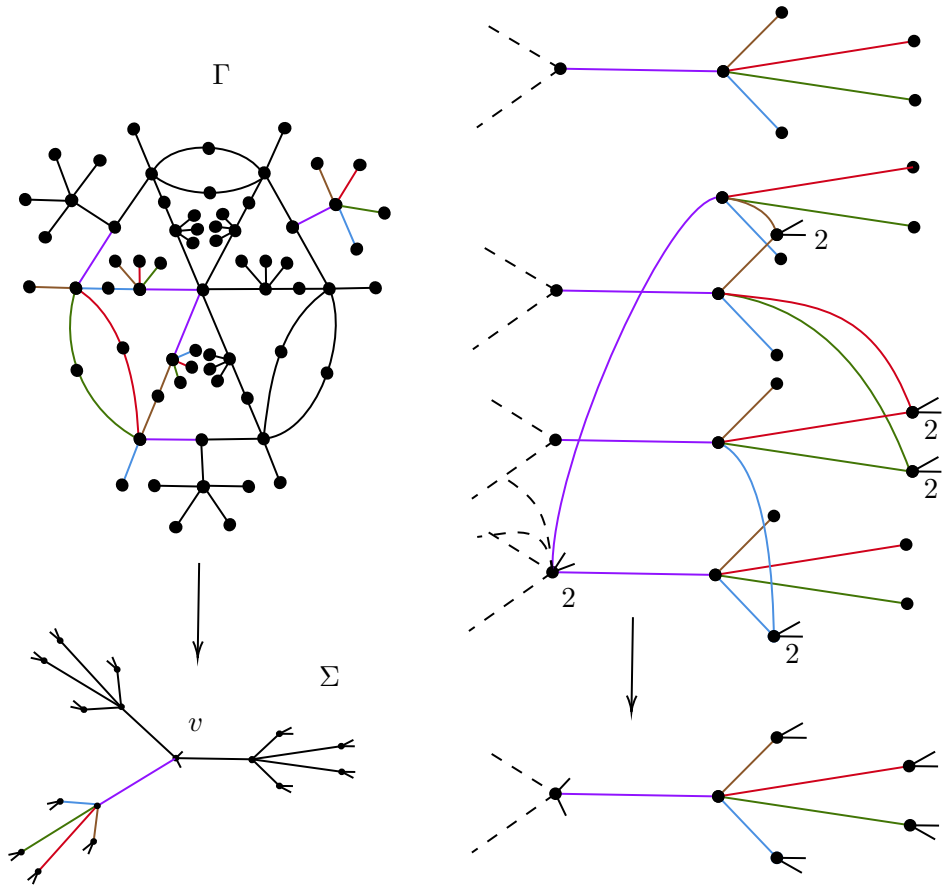}
\end{tikzpicture}
\caption{On the left, $\Gamma$ and $\Sigma$. On the right, the map $\Gamma \to \Sigma$ zoomed in at a subgraph of $\Gamma$; the map is defined analogously on the other parts of $\Gamma$. Numbers indicate the degree of the map on a vertex, if it is different from $1$; half-edges indicate branch or ramification points. The map has stretching factor $1$ along all edges and edges in the same fiber have the same colour. The blue and brown edges have half the length of the other edges.}
\label{fig2}
\end{figure}

Let us recall, how this admissible cover gives rise to a $g_5^1$ on a smooth curve that specializes to the $\mathfrak g_5^1$ on $G^{(2)}$. Let $p \in Y_0$ be a smooth point lying on the irreducible component of $Y_0$ corresponding to the vertex $v$. The underlying graph of $\Gamma$ depicted in Figure~\ref{fig2} can be obtained from $G^{(2)}$ by successively adding either a vertex in an edge or a vertex together with one adjacent edge. Let $\pi \colon X_0 \to X_0^{(2)}$ denote the map, which contracts the components of $X_0$ corresponding to these added vertices. The curve $ X_0^{(2)}$ hence has dual graph $G^{(2)}$. 

The admissible cover $f \colon X_0 \to Y_0$ is the limit of a $g_5^1$ on some one-parameter smoothing $\cX$ of $X_0$. Since $f^* \mathcal O_{Y_0}(p)$ has degree $0$ on all components contracted by $\pi$, $\pi_* f^* \mathcal O_{Y_0}(p)$ is a line bundle on $ X_0^{(2)}$. By construction, its multi-degree is the $\mathfrak g_5^1$ on $G^{(2)}$ depicted in Figure~\ref{fig: tricycle}. Now $\pi$ extends to a map $\cX \to \cX^{(2)}$ that is an isomorphism on the generic fiber and $\pi$ on the special fiber. It follows that also $\pi_* f^* \mathcal O_{Y_0}(p)$ is a limit of a $g_5^1$ on $\cX^{(2)}$. 

In summary, we obtain a line bundle in $\mathrm{Pic}^5( X_0^{(2)})$, that is the limit of a $g_5^1$ on a smooth curve, and whose multi-degree is the given $\mathfrak g_5^1$ on $G^{(2)}$.
Since the stabilization of the underlying graph of $\Gamma$ depicted in Figure~\ref{fig2} is $G$, it follows, in particular, that the closure of the locus of smooth curves of genus $9$ admitting a $g_5^1$ in $\overline {\mathcal M}_9$ intersects the boundary stratum $\mathcal M_G$ parametrizing curves with dual graph $G$.

\subsubsection{(Non)existence of limits of $g_5^1$'s as line bundles} Next, we consider other specializations of $g_5^1$'s to $\mathfrak g_5^1$'s on refinements of $G$. On any regular one parameter smoothing of a curve with dual graph $G$, no limit (as a line bundle) of a $g_5^1$ can exist, since $G$ does not admit a $\mathfrak g_5^1$. More generally, since the rank of a divisor on $G$ is preserved under homothetic refinement, we have the following: 

Suppose $\cX$ is a (not necessarily regular) one-parameter smoothing of a curve $X_0$ with dual graph $G$. Assume furthermore, that locally at each node, $\cX$ is given by $xy - t^k = 0$ \emph{for the same $k$ at each node}. Denote by $\pi \colon \cX' \to \cX$ the minimal desingularization obtained by blowing up $k - 1$ times each node of $X_0$ in the total space $\cX$. It is a regular one-parameter smoothing of a nodal curve $X'_0$ with dual graph $G^{(k)}$. Now if the generic fiber $\cX_\eta$ admits a $g_5^1$, $\mathcal L_\eta$, then there is no extension of $\mathcal L_\eta$ to $\cX$ that restricts to a line bundle on the central fiber $X_0$. 

Indeed, suppose to the contrary that such an extension $\mathcal L$ does exist. Then the pullback $\pi^* \mathcal L$ gives a line bundle that is a $g_5^1$ on the generic fiber, and whose multidegree on $G^{(k)}$ is supported only on vertices of $G$. By the specialization lemma, this multidegree would give a $\mathfrak g_5^1$ on $G^{(k)}$ and hence on $G$, which gives the contradiction. 

This is not the case, if we consider one-parameter smoothings corresponding to refinements of $G$ that are not homothetic. Indeed, consider the $\mathfrak g_5^1$ on the refinement $G'$ of $G$ depicted on the right in Figure~\ref{fig: tricycle}, which is supported on vertices of $G$. One can construct an admissible cover that realizes this divisor as we did for the $\mathfrak g_5^1$ on $G^{(2)}$ in Figure~\ref{fig2}. As above, the admissible cover gives a one-parameter smoothing $\cX''$ whose central fiber has dual graph a refinement of $G'$, together with a $g_5^1$ on the generic fiber that specializes to the $\mathfrak g_5^1$ on $G'$. This implies, that we can choose an extension of the $g_5^1$ on $\cX''_\eta$, whose central fiber is a line bundle whose multidegree is supported on vertices of $G$. Then the stabilization of $\cX''$ gives a (non-regular) one-parameter smoothing of a curve with dual graph $G$, on which a limit of a $g_5^1$ (as a line bundle) does exist.  

\bibliographystyle{alpha}

\begin{thebibliography}{vDdBSvdW22}

\bibitem[ABBR15]{ABBRI}
Omid Amini, Matthew Baker, Erwan Brugall\'{e}, and Joseph Rabinoff.
\newblock Lifting harmonic morphisms {I}: metrized complexes and {B}erkovich
  skeleta.
\newblock {\em Res. Math. Sci.}, 2:Art. 7, 67, 2015.

\bibitem[ACGH85]{MR770932}
E.~Arbarello, M.~Cornalba, P.~A. Griffiths, and J.~Harris.
\newblock {\em Geometry of algebraic curves. {V}ol. {I}}, volume 267 of {\em
  Grundlehren der mathematischen Wissenschaften [Fundamental Principles of
  Mathematical Sciences]}.
\newblock Springer-Verlag, New York, 1985.

\bibitem[AR18]{MR3770859}
Stanislav Atanasov and Dhruv Ranganathan.
\newblock A note on {B}rill-{N}oether existence for graphs of low genus.
\newblock {\em Michigan Math. J.}, 67(1):175--198, 2018.

\bibitem[Bak08]{MR2448666}
Matthew Baker.
\newblock Specialization of linear systems from curves to graphs.
\newblock {\em Algebra Number Theory}, 2(6):613--653, 2008.
\newblock With an appendix by Brian Conrad.

\bibitem[BCM25]{BCM}
Myrla Barbosa, Karl Christ, and Margarida Melo.
\newblock Clifford representatives via the uniform algebraic rank.
\newblock {\em J. Lond. Math. Soc., II. Ser.}, 111(6):24, 2025.
\newblock Id/No e70193.

\bibitem[BJ16]{MR3702316}
Matthew Baker and David Jensen.
\newblock Degeneration of linear series from the tropical point of view and
  applications.
\newblock In {\em Nonarchimedean and tropical geometry}, Simons Symp., pages
  365--433. Springer, [Cham], 2016.

\bibitem[BN07]{BN}
M.~Baker and S.~Norine.
\newblock Riemann-{R}och and {A}bel-{J}acobi theory on a finite graph.
\newblock {\em Adv. Math.}, 215(2):766--788, 2007.

\bibitem[BPVdV84]{MR749574}
W.~Barth, C.~Peters, and A.~Van~de Ven.
\newblock {\em Compact complex surfaces}, volume~4 of {\em Ergebnisse der
  Mathematik und ihrer Grenzgebiete (3) [Results in Mathematics and Related
  Areas (3)]}.
\newblock Springer-Verlag, Berlin, 1984.

\bibitem[Cap12]{MR2895184}
Lucia Caporaso.
\newblock Algebraic and combinatorial {B}rill-{N}oether theory.
\newblock In {\em Compact moduli spaces and vector bundles}, volume 564 of {\em
  Contemp. Math.}, pages 69--85. Amer. Math. Soc., Providence, RI, 2012.

\bibitem[Cap13]{C13}
Lucia Caporaso.
\newblock Rank of divisors on graphs: an algebro-geometric analysis.
\newblock In {\em A celebration of algebraic geometry}, volume~18 of {\em Clay
  Math. Proc.}, pages 45--64. Amer. Math. Soc., Providence, RI, 2013.

\bibitem[Chr23]{C23}
Karl Christ.
\newblock A {C}lifford inequality for semistable curves.
\newblock {\em Math. Z.}, 303(1):Paper No. 15, 20, 2023.

\bibitem[Chr24]{C24}
Karl Christ.
\newblock On the rank of general linear series on stable curves.
\newblock {\em Math. Ann.}, 388:2217 -- 2240, 2024.

\bibitem[CLM15]{CLM}
Lucia Caporaso, Yoav Len, and Margarida Melo.
\newblock Algebraic and combinatorial rank of divisors on finite graphs.
\newblock {\em J. Math. Pures Appl. (9)}, 104(2):227--257, 2015.

\bibitem[CMR16]{CMR}
Renzo Cavalieri, Hannah Markwig, and Dhruv Ranganathan.
\newblock Tropicalizing the space of admissible covers.
\newblock {\em Math. Ann.}, 364(3-4):1275--1313, 2016.

\bibitem[Duo22]{MR4492530}
Phan Thi~Ha Duong.
\newblock Brill-{N}oether conjecture on cactus graphs.
\newblock {\em Acta Math. Vietnam.}, 47(4):833--845, 2022.

\bibitem[Har77]{MR0463157}
Robin Hartshorne.
\newblock {\em Algebraic geometry}.
\newblock Graduate Texts in Mathematics, No. 52. Springer-Verlag, New
  York-Heidelberg, 1977.

\bibitem[HKN13]{MR3092681}
Jan Hladk\'{y}, Daniel Kr\'{a}\v{l}, and Serguei Norine.
\newblock Rank of divisors on tropical curves.
\newblock {\em J. Combin. Theory Ser. A}, 120(7):1521--1538, 2013.

\bibitem[HM82]{HM}
Joe Harris and David Mumford.
\newblock On the {K}odaira dimension of the moduli space of curves.
\newblock {\em Invent. Math.}, 67(1):23--88, 1982.
\newblock With an appendix by William Fulton.

\bibitem[Kol07]{MR2289519}
J\'{a}nos Koll\'{a}r.
\newblock {\em Lectures on resolution of singularities}, volume 166 of {\em
  Annals of Mathematics Studies}.
\newblock Princeton University Press, Princeton, NJ, 2007.

\bibitem[Liu02]{Liu}
Qing Liu.
\newblock {\em Algebraic geometry and arithmetic curves}, volume~6 of {\em Oxf.
  Grad. Texts Math.}
\newblock Oxford: Oxford University Press, 2002.

\bibitem[Man22]{M22}
Madhusudan Manjunath.
\newblock Brill-{N}oether existence on graphs via {$\Bbb R$}-divisors,
  polytopes and lattices.
\newblock {\em Selecta Math. (N.S.)}, 28(2):Paper No. 35, 26, 2022.

\bibitem[vDdBSvdW22]{MR4397104}
Josse van Dobben~de Bruyn, Harry Smit, and Marieke van~der Wegen.
\newblock Discrete and metric divisorial gonality can be different.
\newblock {\em J. Combin. Theory Ser. A}, 189:Paper No. 105619, 19, 2022.

\end{thebibliography}

\end{document}